
\documentclass{article}%
\usepackage{CJK}
\usepackage[CJKbookmarks,pdfstartview=FitH]{hyperref}
\usepackage{amsfonts}
\usepackage{amsmath}
\usepackage{amssymb}
\usepackage[dvipsnames,usenames]{color}
\usepackage{color}
\usepackage{colortbl}
\usepackage{graphicx}
\usepackage{amssymb}
\usepackage{graphicx}
\usepackage{makeidx}%
\setcounter{MaxMatrixCols}{30}
\providecommand{\U}[1]{\protect\rule{.1in}{.1in}}
\newtheorem{theorem}{\hskip\parindent\bf{Theorem}}[section]

\newtheorem{corollary}[theorem]{Corollary}

\newtheorem{lemma}[theorem]{Lemma}

\newtheorem{proposition}[theorem]{Proposition}

\newenvironment{proof}[1][Proof]{\noindent\textbf{#1.} }{\ \hfill\rule{0.5em}{0.5em}}

\RequirePackage{CJK}
\AtBeginDocument{\begin{CJK*}{GBK}{song}\CJKtilde}
\AtEndDocument{\end{CJK*}} \makeindex

\begin{document}

\title{The ternary Goldbach problem with primes in positive density sets}
\author{Quanli Shen \thanks{The research was supported by 973Grant 2013CB834201.}}
\date{}
\maketitle

\begin{abstract}
Let $\mathcal{P}$ denote the set of all primes. $P_{1},P_{2},P_{3}$ are three
subsets of $\mathcal{P}$. Let $\underline{\delta}(P_{i})$ $(i=1,2,3)$ denote
the lower density of $P_{i}$ in $\mathcal{P}$, respectively. It is proved that
if $\underline{\delta}(P_{1})>5/8$, $\underline{\delta}(P_{2})\geq5/8$, and
$\underline{\delta}(P_{3})\geq5/8$, then for every sufficiently large odd
integer n, there exist $p_{i} \in P_{i}$ such that $n=p_{1}+p_{2}+p_{3}$. The
condition is the best possible.
\end{abstract}
\noindent \textbf{Keywords.} the ternary Goldbach problem; positive density; transference principle.

\setcounter{tocdepth}{5}

\section{Introduction}

The ternary Goldbach conjecture states that every odd positive integer greater
than 5 can be written as sums of three primes. It was first proposed from an
exchange of letters between Goldbach and Euler in 1742. Until 1923, Hardy and
Littlewood \cite{11 G. H. Hardy and J. E. Littlewood} claimed it is true for
sufficiently large positive odd integers, depending on the generalised Riemann
hypothesis (GRH). Instead, in 1937, I. M. Vinogradov \cite{7 Vinogradov}
showed for the first time a nontrivial estimate of exponential sums over
primes, and solved this problem unconditionally. It should be noted that,
recently, H. A. Helfgott \cite{8 Helfgott,9 Helfgott major,10 Helfgott
ternary} (2014) has completely proved the ternary Goldbach conjecture for
every odd integer n greater than 5.

The main idea used above is circle method which is founded by Hardy and
Littlewood. On the other hand, B. Green proposed the transference principle,
and now it is playing an increasing important role in number theory \cite{2
Green,3 Green and Tao}. Employing this method, H. Li and H. Pan extended
\cite{4 H. Li and H. Pan} (2010) the Vinogradov's three primes theorem to a
density version. Let $\mathcal{P}$ denote the set of all primes. For a subset
$A\subset\mathcal{P}$, the lower density of $A$ in $\mathcal{P}$ is defined
by
\[
\underline{\delta}(A)=\liminf\limits_{N\rightarrow\infty}\frac{\left\vert
A\cap\lbrack1,N]\right\vert }{\left\vert \mathcal{P}\cap\lbrack1,N]\right\vert
}.
\]
They stated that if $P_{1},P_{2},P_{3}$ are three subsets of $\mathcal{P}$
satisfying that
\[
\underline{\delta}(P_{1})+\underline{\delta}(P_{2})+\underline{\delta}%
(P_{3})>2,
\]
then for every sufficiently large odd integer $n$, there exist $p_{i}\in
P_{i}$ $(i=1,2,3)$ such that $n=p_{1}+p_{2}+p_{3}$.

Motivated by the work of Li and Pan, X. Shao proved \cite{5 X. Shao} (2014)
that if $A$ is a subset of $\mathcal{P}$ with
\[
\underline{\delta}(A)>\frac{5}{8},
\]
then for every sufficiently large odd integer $n$, there exist $p_{i}\in A$
$(i=1,2,3)$ such that $n=p_{1}+p_{2}+p_{3}$. It is worth mentioning that X.
Shao gave \cite{6 X. Shao} (2014) an l-function-free proof of Vinogradov's
three primes theorem.

This paper is to revise Shao's method, and show the following result.

\begin{theorem}
\label{thm 1.1}Let $P_{1},P_{2},P_{3}$ be three subsets of $\mathcal{P}$,
satisfying that%
\[
\underline{\delta}(P_{1})>\frac{5}{8}\text{, }\underline{\delta}(P_{2}%
)\geq\frac{5}{8}\text{, }\underline{\delta}(P_{3})\geq\frac{5}{8}.
\]
Then for every sufficiently large odd integer $n$, there exist $p_{i}\in
P_{i}$ $(i=1,2,3)$ such that $n=p_{1}+p_{2}+p_{3}$.
\end{theorem}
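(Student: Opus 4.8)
The plan is to run the Hardy--Littlewood circle method through Green's transference principle, in the style of Li--Pan and of Shao, but carrying the three distinct density sets $P_1,P_2,P_3$ through the whole argument. \textbf{Step 1 ($W$-trick and normalization).} Fix a parameter $w=w(n)\to\infty$ very slowly, put $W=\prod_{p\le w}p$, and let $N$ be a prime with $N\asymp n/W$. For a reduced residue $b\bmod W$ write $d_i(b)$ for the relative density of $P_i$ inside the primes congruent to $b$ modulo $W$; since $\underline{\delta}(P_i)\ge 5/8$, for $n$ large we have $\mathbb{E}_{(b,W)=1}d_i(b)\ge 5/8-o(1)$, with strict room to spare for $i=1$. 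On the relevant window define $f_{i,b}(x)=\tfrac{\phi(W)}{W}\log(Wx+b)\,\mathbf{1}[\,Wx+b\in P_i\,]$ as a function on $\mathbb{Z}/N\mathbb{Z}$, so that $\mathbb{E}f_{i,b}=d_i(b)+o(1)$ by the prime number theorem in arithmetic progressions.

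\textbf{Step 2 (majorant, restriction, transference).} Majorize every $f_{i,b}$ by a single pseudorandom measure $\nu$ --- the truncated divisor-sum (Goldston--Y\i ld\i r\i m) majorant of Green--Tao, or, to keep the argument $L$-function free, the sieve majorant of Shao's second paper --- and verify the linear-forms/correlation conditions together with a restriction estimate $\|\widehat{f_{i,b}}\|_{\ell^q}\ll 1$ for a fixed $q$ slightly above $2$. The transference principle then produces models $g_{i,b}\colon\mathbb{Z}/N\mathbb{Z}\to[0,1]$ with $\mathbb{E}g_{i,b}=\mathbb{E}f_{i,b}$ and $\|\widehat{f_{i,b}}-\widehat{g_{i,b}}\|_\infty$ as small as we wish once $w$ is large; a telescoping-plus-H\"older estimate (using the restriction bound to pair the two heavy Fourier factors against the small $\ell^\infty$ one) shows that the weighted count of representations $n=p_1+p_2+p_3$ with $p_i\equiv b_i\pmod W$ and $p_i\in P_i$ is at least $c\,(g_{1,b_1}*g_{2,b_2}*g_{3,b_3})(m)\,N^2-o(N^2)$ for an absolute $c>0$, where $m=(n-b_1-b_2-b_3)/W$. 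Hence it suffices to exhibit reduced residues $b_1,b_2,b_3$ with $b_1+b_2+b_3\equiv n\pmod W$ for which this model convolution is $\gg 1$.

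\textbf{Step 3 (the combinatorial core).} For $h,h'\colon\mathbb{Z}/N\mathbb{Z}\to[0,1]$ of means $\alpha,\alpha'$ one has $hh'\ge h+h'-1$ pointwise, hence $(h*h')(t)\ge\alpha+\alpha'-1$ for every $t$, and so $(g_1*g_2*g_3)(m)\ge(\alpha_i+\alpha_j-1)\,\alpha_k$ whenever $\alpha_i+\alpha_j>1$. Thus it is enough to find $b_1+b_2+b_3\equiv n\pmod W$ with $d_i(b_i)+d_j(b_j)\ge 1+\eta$ and $d_k(b_k)\ge\eta$ for some fixed $\eta>0$ and some permutation $\{i,j,k\}$ of $\{1,2,3\}$. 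Setting $B_i=\{(b,W)=1:d_i(b)\ge\tfrac12+\eta\}$ and $C_i=\{(b,W)=1:d_i(b)\ge\eta\}$, the bound $\mathbb{E}_b d_i(b)\ge 5/8$ forces $|B_i|/\phi(W)\gtrsim 1/4$ and $|C_i|/\phi(W)\ge 5/8-o(1)$, and the task becomes to show $n\in B_1+B_2+C_3$ (or a permutation) in $(\mathbb{Z}/W,+)$, with a robust number of representations. This is the delicate point, since density-$1/4$ subsets of $(\mathbb{Z}/W)^{\ast}$ can genuinely avoid certain residues modulo the small primes dividing $W$. Following Shao, I would analyze the joint distribution of $d_1,d_2,d_3$ across residues modulo each fixed small prime: after the $W$-trick the obstruction reduces to a finite additive/linear-programming problem about functions $(\mathbb{Z}/r)^{\ast}\to[0,1]$ with averages $\ge 5/8$, and one shows that at this density the extremal congruence (and Bohr-type) configurations that could block all valid residue triples do not exist --- with $\underline{\delta}(P_1)>5/8$ supplying exactly the slack needed when $\underline{\delta}(P_2)$ and $\underline{\delta}(P_3)$ sit at the boundary. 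This is also the mechanism by which $5/8$ is optimal: the standard extremal construction realizes the borderline case.

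The hard part is Step 3, and within it the case in which one or more of the $d_i$ is concentrated, so that the rich set $B_i$ is small and possibly structured. Relative to Shao's theorem the genuinely new feature is that this extremal analysis must be carried out with three independent and unequal densities, so the interplay between which residue classes are rich for each $P_i$ and which triples of classes add up to $n$ has to be optimized without the symmetry Shao could use; keeping track of the asymmetry $(>5/8,\ \ge 5/8,\ \ge 5/8)$ is precisely what makes the constant $5/8$ survive as stated. By contrast, the transference and restriction inputs of Steps 1--2, while technical, are by now routine and extend to three sets essentially verbatim, provided one records that every estimate is uniform over the finitely many residue classes $b\bmod W$ in play.
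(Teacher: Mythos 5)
Your Steps 1 and 2 (the $W$-trick, majorant, restriction estimate, and transference skeleton) match what the paper does; the paper packages this as a cited transference proposition (Proposition~\ref{prop 3.3}, imported from Shao). The real question is Step 3, and there your proposal has a genuine gap — and, separately, it misses the actual new ingredient of the paper.

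First, the gap. Your pointwise bound $hh'\ge h+h'-1$ reduces the problem to finding $b_1+b_2+b_3\equiv n\pmod W$ with $d_{\sigma(1)}(b_1)\ge\tfrac12+\eta$, $d_{\sigma(2)}(b_2)\ge\tfrac12+\eta$, $d_{\sigma(3)}(b_3)\ge\eta$ for some permutation $\sigma$. This is a strictly stronger condition than the one the paper's transference setup actually needs (the paper only needs the local densities to have sum $>\tfrac32$ with all three positive, a weaker asymmetric split). You then say the residue-covering claim follows from a ``finite additive/linear-programming problem'' and that ``one shows the extremal configurations do not exist,'' but this is precisely the heart of the matter, and you don't prove it. In particular, $W$ is highly composite, so the Cauchy--Davenport theorem you are implicitly invoking does \emph{not} apply to $\mathbb{Z}/W$; the paper explicitly remarks that Cauchy--Davenport--Chowla is not available for general composite moduli. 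One must instead peel primes off $W$ one at a time by an averaging/induction argument (the paper's Lemma~\ref{lem 3.1}), and the passage through each prime $p\mid W$ degrades the density information. Your $B_i$/$C_i$ dichotomy does not obviously survive that averaging; at the very least the sufficiency of the ``two rich, one positive'' reduction under this induction would have to be proved, and it isn't.

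Second, the omission. What actually makes the asymmetric densities $(>5/8,\ \ge5/8,\ \ge5/8)$ work in the paper is Theorem~\ref{thm 1.2}: a combinatorial inequality for three (possibly different) decreasing sequences of length $n$, valid for all even $n\ge6$. Shao's version of this lemma required $n\ge10$, which sufficed only for the symmetric case $P_1=P_2=P_3$; extending it down to $n\ge6$ (i.e.\ handling the primes $p=7$ in the base case of the induction) is exactly the new technical contribution, and it is what lets Lemma~\ref{lem 3.1} and then Theorem~\ref{thm 1.3} go through with three unequal densities. Your proposal treats the extension from Shao as largely bookkeeping (``extend essentially verbatim'') and never identifies or proves this inequality, but without it the induction over primes dividing $W$ collapses for the small primes, and the argument does not close. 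So the proposal is not a complete proof: the combinatorial core (both the residue-covering statement over $\mathbb{Z}/W$ and the underlying $n\ge6$ inequality) is asserted rather than established.
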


Note that Theorem 1.1 in \cite{5 X. Shao} can be immediately obtained from the
above theorem. We remark that the condition in Theorem \ref{thm 1.1} cannot be
improved, and the counterexample can be seen in \cite{5 X. Shao}. Here we
provide another counterexample. Let $P_{1}=P_{2}=P_{3}=\{n\in P|n\equiv
1,4,7,11,13$ $(\operatorname{mod}15)\}$. Note that $\underline{\delta}%
(P_{1})=\underline{\delta}(P_{2})=\underline{\delta}(P_{2})=5/8$, but
$N\equiv2$ $(\operatorname{mod}15)$ cannot be written by $p_{1}+p_{2}+p_{3}$
with $p_{i}\in P_{i}$ $(i=1,2,3)$.

The key to our proof is the following theorem:

\begin{theorem}
\label{thm 1.2}Let $n\geq6$ be an even number. Let $\{a_{i}\},\{b_{i}%
\},\{c_{i}\}$ $(0\leq i<n)$ are three decreasing sequences of real numbers in
$[0,1]$. Let $A,B,C$ denote the averages of $\{a_{i}\},\{b_{i}\},\{c_{i}\}$,
respectively. Suppose that for all triples $(i,j,k)$ with $0\leq i,j,k<n$ and
$i+j+k\geq n$, we have
\[
a_{i}b_{j}+b_{j}c_{k}+c_{k}a_{i}\leq\frac{5}{8}(a_{i}+b_{j}+c_{k}).
\]
Then we have
\[
AB+BC+CA\leq\frac{5}{8}(A+B+C).
\]

\end{theorem}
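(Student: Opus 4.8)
The plan is to exploit the monotonicity of the three sequences to reduce the averaged inequality to a statement about a single well-chosen threshold index, and then to feed that threshold configuration into the pointwise hypothesis. First I would set $f(x,y,z)=\tfrac{5}{8}(x+y+z)-(xy+yz+zx)$, so the hypothesis reads $f(a_i,b_j,c_k)\geq 0$ whenever $i+j+k\geq n$, and the goal is $f(A,B,C)\geq 0$. The key analytic feature of $f$ is that it is affine (degree one) in each variable separately; this is what makes averaging tractable, since for a fixed pair of the other two variables $f$ is linear in the third. Thus if I can locate values $\alpha\in[0,1]$, $\beta\in[0,1]$, $\gamma\in[0,1]$ that are ``dominated'' by the averages in the right direction — meaning there is a triple of indices $(i,j,k)$ with $i+j+k\geq n$ and $a_i\geq \alpha\geq\cdots$ comparisons that survive the averaging — then $f(A,B,C)$ will be bounded below by $f$ evaluated at these comparison values.

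The main device I expect to use is a cutoff argument. Fix a parameter and split $\{0,\dots,n-1\}$ into an initial block and a terminal block; since the sequences are decreasing, on the initial block each term is at least its "block average" and on the terminal block at most its block average. The constraint $i+j+k\geq n$ with $n$ even suggests the symmetric choice $i,j,k\geq n/2$: any such triple satisfies $i+j+k\geq 3n/2\geq n$, so the hypothesis applies to all of them. Hence, writing $A' = \frac{2}{n}\sum_{i\geq n/2} a_i$ for the average over the top half (and similarly $B', C'$), monotonicity gives $A'\leq a_i$ is false — rather $a_i\leq a_{n/2-1}$ and $A'\geq$ the infimum; I would instead average the pointwise inequality $f(a_i,b_j,c_k)\geq 0$ over all $i,j,k\in[n/2,n)$ and use linearity in each variable to get $f(A',B',C')\geq 0$ directly. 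Then it remains to pass from $(A',B',C')$ (top-half averages) to $(A,B,C)$ (full averages): since $A\geq A'$, $B\geq B'$, $C\geq C'$, and since on the relevant region $f$ may be increasing in each variable, one hopes $f(A,B,C)\geq f(A',B',C')\geq 0$.

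The hard part will be exactly that last monotonicity step: $f(x,y,z)=\tfrac58(x+y+z)-(xy+yz+zx)$ is decreasing in $x$ once $y+z>\tfrac58$, so one cannot simply increase the arguments for free. This is where the constant $5/8$ and the evenness of $n$ must really enter, and where Shao's original (non-averaged) extremal analysis gets used. I would therefore not use the crude top-half split alone, but rather choose the cutoff adaptively: pick indices $i_0,j_0,k_0$ with $i_0+j_0+k_0 = n$ (or the smallest sum $\geq n$) and compare the "below-cutoff" averages $\frac{1}{i_0}\sum_{i<i_0}a_i\geq a_{i_0-1}$ against the cutoff values, handling the "above-cutoff" tails separately where the terms are small enough that the concavity works in our favour. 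Concretely I would (i) reduce to the two-valued case where each sequence takes only two values (a "$1$-part" and a "$0$-or-small part"), justified because $f$ being multilinear means its average over a product set is $f$ of the marginal averages, and extreme configurations are two-valued; (ii) in that reduced setting the problem becomes a finite optimization over the split points, which is the combinatorial heart matching the mod-$15$ counterexample; (iii) verify the inequality there by direct computation, checking that $i+j+k\geq n$ forces the split points to interact so that the $5/8$ bound propagates. I expect step (ii)–(iii), the finite extremal analysis pinning down why $5/8$ (and not a larger constant) is forced, to be the main obstacle, and the place where the argument must be done carefully rather than quoted.
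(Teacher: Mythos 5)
Your opening reduction---average the pointwise inequality over the product set $\{n/2,\dots,n-1\}^3$ to conclude $f(A',B',C')\geq 0$ for the top-half averages, where $f(x,y,z)=\tfrac{5}{8}(x+y+z)-(xy+yz+zx)$---is correct, because $f$ is multilinear and you are averaging over a product set of indices all of whose triples satisfy $i+j+k\geq n$. But you also correctly observe that this does not yield $f(A,B,C)\geq 0$: the sequences are decreasing, so $A'\leq A$, $B'\leq B$, $C'\leq C$, and $f$ is not monotone increasing on the relevant region. That identified difficulty is exactly where the proposal stops being a proof. The proposed fix---(i) reduce to two-valued sequences, (ii)--(iii) carry out a finite extremal analysis---is not justified: the constraints $f(a_i,b_j,c_k)\geq 0$ are bilinear, not linear, in the three sequences, so the standard ``extremizers sit at vertices of a polytope, hence are two-valued'' argument does not apply without further work, and none is supplied. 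Steps (ii)--(iii) are a declared intention, not a computation. In short, the hard part is both correctly located and left undone.

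The paper's actual argument is structurally different and not anticipated in your sketch. After an affine change of variables, it writes $n=2m$, splits each sequence into half-block sums $X_0=\sum_{i<m}x_i$ and $X_1=\sum_{i\geq m}x_i$ (similarly $Y_0,Y_1,Z_0,Z_1$), and then sums the hypothesis not over product sets but over \emph{diagonal slices} such as $\mathcal{M}=\{(i,j,k):0\leq i,j<m,\ m\leq k<n,\ i+j+k\equiv 0 \pmod m\}$. Because for any two of $i,j,k$ the third is uniquely determined mod $m$, the slice sum collapses exactly to $X_0Y_0+Y_0Z_1+Z_1X_0$, and every triple in $\mathcal{M}$ except $(0,0,m)$ satisfies $i+j+k\geq n$. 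Combining four such slice sums controls $(X_0+X_1)(Y_0+Y_1)+\cdots$ up to boundary terms in $x_0,x_m,y_0,y_m,z_0,z_m$, and the remainder of the proof is a sign-based case analysis on $X_1,Y_1,Z_1$ and on auxiliary quantities $E,F,G$ formed from those six boundary values. This modular slicing is the device that lets the constraint $i+j+k\geq n$ act across different blocks; your product-set averaging only sees triples with all three indices in the same (top) block. Without an idea of this kind, or a completed two-valued reduction, the proposal has a genuine gap at precisely the step you flagged.
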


It was \cite[Lemma 2.2]{5 X. Shao} with the condition $n\geq10$, which could
only deduce Theorem \ref{thm 1.1} with $P_{i}=A$ $(i=1,2,3)$. X. Shao remarked
there exists the numerical evidence for the conditon $n\geq6$. In this paper,
we verify its truth and apply it as the critical step which enables the
argument of Shao to be valid for the general case.

Theorem \ref{thm 1.2} can deduce the following

\begin{theorem}
\label{thm 1.3}Let $0<\delta<5/32$ and $0<\eta<2\delta/5$ be parameters. Let
$m$ be a square-free positive odd integer. Let $f_{1},f_{2},f_{3}%
:\mathbb{Z}_{m}^{\ast}\rightarrow\lbrack0,1]$ be functions satisfying%
\[
\frac{1}{\phi(m)}%
{\displaystyle\sum\limits_{x\in\mathbb{Z}_{m}^{\ast}}}
f_{1}(x)>\frac{5}{8}+\delta\text{, }\frac{1}{\phi(m)}%
{\displaystyle\sum\limits_{x\in\mathbb{Z}_{m}^{\ast}}}
f_{2}(x)>\frac{5}{8}-\eta\text{, }\frac{1}{\phi(m)}%
{\displaystyle\sum\limits_{x\in\mathbb{Z}_{m}^{\ast}}}
f_{3}(x)>\frac{5}{8}-\eta,
\]
where $\phi$ is the Euler totient function. Then for any $x\in\mathbb{Z}_{m}$,
there exist $a,b,c\in\mathbb{Z}_{m}^{\ast}$ with $x=a+b+c$ such that
\[
f_{1}(a)f_{2}(b)f_{3}(c)>0,\text{ }f_{1}(a)+f_{2}(b)+f_{3}(c)>\frac{3}{2}.
\]

\end{theorem}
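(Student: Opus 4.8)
The plan is to deduce this from Theorem \ref{thm 1.2} by a combinatorial / Fourier-free counting argument over $\mathbb{Z}_m^{\ast}$, reducing the modular problem to the one-dimensional monotone-rearrangement inequality. First I would fix $x\in\mathbb{Z}_m$ and argue by contradiction: suppose that for \emph{every} representation $x=a+b+c$ with $a,b,c\in\mathbb{Z}_m^{\ast}$ one has either $f_1(a)f_2(b)f_3(c)=0$ or $f_1(a)+f_2(b)+f_3(c)\le 3/2$. Since $m$ is square-free and odd, I would use the Chinese Remainder Theorem to factor $m=p_1\cdots p_r$ and reduce to understanding, prime by prime, when $a+b+c\equiv x$ is solvable with all three coordinates in $\mathbb{Z}_{p_j}^{\ast}$; this is where the constant $5/8$ and the modulus $15$ in the counterexample come from — one expects the binding constraint to occur at small primes $p=3,5$, and for each such $p$ the relevant ``bad'' configuration is governed by a residue count of size essentially $p/2$ matching the weight $5/8$.

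The second step is to set up the averaging. For each $a\in\mathbb{Z}_m^{\ast}$ let $g_1(a)=f_1(a)$, and similarly define, for the conjugate variables, weighted sums over the solution set $S_x=\{(a,b,c):a+b+c=x,\ a,b,c\in\mathbb{Z}_m^{\ast}\}$. The contradiction hypothesis says that on $S_x$ the product $f_1(a)f_2(b)f_3(c)$ and the truncated sum never simultaneously exceed their thresholds, so a pointwise inequality of the shape
\[
f_1(a)f_2(b)+f_2(b)f_3(c)+f_3(c)f_1(a)\le \tfrac{5}{8}\bigl(f_1(a)+f_2(b)+f_3(c)\bigr)
\]
holds for all $(a,b,c)\in S_x$ — this is exactly the hypothesis of Theorem \ref{thm 1.2}, and it is the crux of the reduction: one must check that ``$f_1f_2f_3>0$ and $f_1+f_2+f_3>3/2$ fails'' forces this quadratic inequality, which follows from the elementary fact that for $u,v,w\in[0,1]$ one has $uv+vw+wu\le\frac58(u+v+w)$ whenever $uvw=0$ or $u+v+w\le 3/2$ (the equality case being $u=v=w=5/8$, or two of them $1$ and one $0$ — precisely the extremal data).

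The third step is to convert the modular averages into the hypotheses of Theorem \ref{thm 1.2} by rearrangement. I would sort the values $\{f_1(a)\}_{a\in\mathbb{Z}_m^{\ast}}$ into a decreasing sequence $\{a_i\}$, and similarly $\{b_i\},\{c_i\}$ from $f_2,f_3$; then $A,B,C$ are exactly the averages in the theorem, with $A>5/8+\delta$, $B,C>5/8-\eta$. The parameter ranges $0<\delta<5/32$, $0<\eta<2\delta/5$ are chosen precisely so that $A>5/8$, $B,C$ are close to $5/8$ from below, and the combination still forces $AB+BC+CA>\frac58(A+B+C)$ — a short convexity computation using $A+B+C>3/2$ (which holds since $\delta>\tfrac52\eta$ gives $\delta-2\eta>-\delta/... $, i.e. the deficits are outweighed). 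The point is that the strict inequality $AB+BC+CA>\frac58(A+B+C)$ \emph{contradicts} the conclusion of Theorem \ref{thm 1.2}, once I have arranged that the index condition $i+j+k\ge n$ corresponds to the solvability of $a+b+c=x$ in units mod $m$.

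The main obstacle I anticipate is exactly that last correspondence: translating ``$(a,b,c)\in S_x$'' into ``$i+j+k\ge n$'' for the sorted sequences. This needs a Cauchy–Davenport / Kneser-type input at each prime factor $p\mid m$ to guarantee that for any choice of sufficiently large subsets of $\mathbb{Z}_p^{\ast}$ the threefold sumset covers $x\bmod p$, so that after sorting, any triple of indices with large enough sum indexes a genuine solution. I would handle the degenerate primes $p=3$ (where $|\mathbb{Z}_3^{\ast}|=2$) by hand, and for $p\ge5$ use that a subset of $\mathbb{Z}_p^{\ast}$ of density $>5/8$ has $3$-fold sumset equal to all of $\mathbb{Z}_p$. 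Assembling these via CRT gives $n=\phi(m)$ (even, as required, since $m$ is odd) and the needed index inequality, at which point Theorem \ref{thm 1.2} delivers the contradiction and the proof is complete.
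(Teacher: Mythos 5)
Your contrapositive reformulation and the pointwise equivalence in your second step are both correct: for $u,v,w\in[0,1]$ the inequality $uv+vw+wu\le\frac{5}{8}(u+v+w)$ does hold whenever $uvw=0$ or $u+v+w\le 3/2$, and its strict failure forces both $uvw>0$ and $u+v+w>3/2$, so this is logically the same argument as the paper's (which runs in the positive direction, via Cauchy--Davenport applied to superlevel sets and the monotonicity of $h(x,y,z)=xy+yz+zx-\frac{5}{8}(x+y+z)$ on the region where pairwise sums exceed $\frac{5}{8}$). The numerical computation in your third step is also essentially what the paper carries out. Where the proposal actually breaks is at the bridge you yourself flag as the main obstacle, and it breaks in two ways that cannot be papered over.

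First, you sort the values $\{f_i(a)\}$ over all of $\mathbb{Z}_m^{\ast}$ at once and want ``$i+j+k\ge\phi(m)$'' to index a genuine representation $x=a+b+c$ in units. For this you need the superlevel sets $I,J,K$, with $|I|+|J|+|K|\ge\phi(m)+3$, to have threefold sumset equal to $\mathbb{Z}_m$; Cauchy--Davenport gives that only for prime $m$, and for composite $m$ no such bound is available (and $\phi(m)+1$ is in general far below $m$). The paper's Lemma~3.1 avoids this by an induction on the prime divisors of $m$: at each step it averages $f_i$ over one prime coordinate, producing functions on a smaller modulus, and applies Theorem~\ref{thm 1.2} together with Cauchy--Davenport only inside a single $\mathbb{Z}_p$. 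Your ``assemble via CRT'' sentence gestures at this but does not supply it; a single global sort does not respect the CRT product structure. Second, and independently, the primes $3$ and $5$ cannot be handled inside this scheme at all: Theorem~\ref{thm 1.2} requires $n\ge 6$, but $n=\phi(p)$ is $2$ for $p=3$ and $4$ for $p=5$, so the rearrangement inequality has no content there, regardless of any sumset facts you might verify by hand. The paper isolates exactly this: Lemma~3.1 is proved only for $(m,30)=1$, and a separate finite verification at $m=15$ (Shao's Proposition~3.2, quoted here as Lemma~3.2) supplies the base case and the full product-and-sum conclusion. Your proposal contains no analogue of that step, so as written it cannot reach any odd $m$ divisible by $3$ or $5$ --- precisely the moduli that make the constant $\frac{5}{8}$ sharp.
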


Theorem \ref{thm 1.3} is crucial for applying transference principle in
section 4. It also asserts that $A+B+C$ must cover all residue classes modulo
$m$ for any square-free odd $m$, provided that $A,B,C\subset\mathbb{Z}%
_{m}^{\ast}$ with $\delta(A)>5/8,\delta(B)\geq5/8,\delta(C)\geq5/8$, where
$\delta(A)$ denotes the density of $A$ in $\mathbb{Z}_{m}^{\ast}$. It is the
following Corollary \ref{corol 1.4}, which extends \cite[Corollary 1.5]{5 X.
Shao}. Note that if $m$ is a prime, Corollary \ref{corol 1.4} can be
immediately proved by the Cauchy-Davenport-Chowla theorem \cite{12 Green and
Vu}, which asserts that if $A,B,C$ are subsets of $\mathbb{Z}_{p}$ for prime
$p$, then $\left\vert A+B+C\right\vert $ $\geq\min(\left\vert A\right\vert
+\left\vert B\right\vert +\left\vert C\right\vert -2,p)$. However, we cannot
assure whether the Cauchy-Davenport-Chowla theorem is valid for arbitrary
integer $m$.

If $A,B,C\subset\mathbb{Z}_{m}^{\ast}$ are subsets of $\mathbb{Z}_{m}^{\ast}$,
denote by $f_{i}(x)$ $(x=1,2,3)$ the characteristic functions of $A,B,C$,
respectively. Then by Theorem \ref{thm 1.3} we have the following

\begin{corollary}
\label{corol 1.4}Let $m$ be a square-free positive odd integer. Let
$A_{1},A_{2},A_{3}$ be three subsets of $\mathbb{Z}_{m}^{\ast}$ with
$\left\vert A_{1}\right\vert >\frac{5}{8}\phi(m),$ and $\left\vert
A_{i}\right\vert \geq\frac{5}{8}\phi(m)$ $(i=2,3)$. Then $A_{1}+A_{2}%
+A_{3}=\mathbb{Z}_{m}$.
\end{corollary}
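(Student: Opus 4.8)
The plan is to deduce Corollary~\ref{corol 1.4} directly from Theorem~\ref{thm 1.3} by the reduction already indicated in the paper. Given subsets $A_1,A_2,A_3 \subset \mathbb{Z}_m^\ast$ with $|A_1| > \frac{5}{8}\phi(m)$ and $|A_i| \ge \frac{5}{8}\phi(m)$ for $i=2,3$, I would let $f_i$ be the characteristic function of $A_i$, so that $f_i : \mathbb{Z}_m^\ast \to \{0,1\} \subset [0,1]$ and
\[
\frac{1}{\phi(m)}\sum_{x\in\mathbb{Z}_m^\ast} f_1(x) = \frac{|A_1|}{\phi(m)} > \frac{5}{8},
\qquad
\frac{1}{\phi(m)}\sum_{x\in\mathbb{Z}_m^\ast} f_i(x) = \frac{|A_i|}{\phi(m)} \ge \frac{5}{8}
\quad (i=2,3).
\]
The slight mismatch with the hypotheses of Theorem~\ref{thm 1.3} (which demand strict inequalities $>\frac{5}{8}+\delta$ and $>\frac{5}{8}-\eta$ for explicit parameters) is handled by choosing $\delta,\eta$ small enough: since $|A_1|/\phi(m)$ strictly exceeds $5/8$, there is a positive gap, so I would pick $\delta \in (0,5/32)$ with $\delta < |A_1|/\phi(m) - 5/8$, and then pick $\eta \in (0, 2\delta/5)$ arbitrarily; then $|A_1|/\phi(m) > 5/8 + \delta$ and $|A_i|/\phi(m) \ge 5/8 > 5/8 - \eta$ hold, so the hypotheses of Theorem~\ref{thm 1.3} are met.

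Next, I would fix an arbitrary $x \in \mathbb{Z}_m$ and apply Theorem~\ref{thm 1.3} to obtain $a,b,c \in \mathbb{Z}_m^\ast$ with $x = a+b+c$ and $f_1(a)f_2(b)f_3(c) > 0$. Since the $f_i$ are $\{0,1\}$-valued, $f_1(a)f_2(b)f_3(c) > 0$ forces $f_1(a) = f_2(b) = f_3(c) = 1$, i.e.\ $a \in A_1$, $b \in A_2$, $c \in A_3$. Hence $x = a+b+c \in A_1 + A_2 + A_3$. As $x$ was arbitrary, $\mathbb{Z}_m \subseteq A_1 + A_2 + A_3$; the reverse inclusion is trivial, so $A_1 + A_2 + A_3 = \mathbb{Z}_m$.

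There is essentially no obstacle here: the corollary is a formal consequence of Theorem~\ref{thm 1.3}, and the only point requiring a word of care is the passage from the non-strict density bound $|A_i| \ge \frac{5}{8}\phi(m)$ to the parametrized strict bound $>\frac{5}{8}-\eta$, which is immediate since $\eta > 0$. (I note in passing that the auxiliary conclusion $f_1(a) + f_2(b) + f_3(c) > \frac{3}{2}$ supplied by Theorem~\ref{thm 1.3} is not even needed for the corollary — in the characteristic-function case it is automatic from $f_1(a)f_2(b)f_3(c) > 0$, which already gives the sum equal to $3$.) The substantive content all sits upstream in Theorems~\ref{thm 1.2} and~\ref{thm 1.3}.
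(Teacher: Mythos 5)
Your proof is correct and follows the same route as the paper: take $f_i$ to be the characteristic function of $A_i$ and apply Theorem~\ref{thm 1.3}. Your explicit choice of $\delta$ and $\eta$ to bridge the non-strict hypothesis $|A_i|\ge\frac{5}{8}\phi(m)$ to the parametrized strict inequalities of Theorem~\ref{thm 1.3} is exactly the small bookkeeping step the paper leaves implicit.
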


\section{Proof of Theorem 1.2}

We first make the change of the variables $x_{i}=\frac{16}{5}a_{i}%
-1,\ y_{i}=\frac{16}{5}b_{i}-1,\ z_{i}=\frac{16}{5}c_{i}-1$. Note that
$\{x_{i}\},\{y_{i}\},\{z_{i}\}$ are three decreasing sequences of real numbers
in $[-1,2.2]$. Let $X,Y,Z$ denote the averages of $\{x_{i}\},\{y_{i}%
\},\{z_{i}\}$, respectively.

Now our goal is to confirm that if
\begin{equation}
x_{i}y_{j}+y_{j}z_{k}+z_{k}x_{i}\leq3 \label{equ:2.1}%
\end{equation}
for all $0\leq i,j,k<n$ with $i+j+k\geq n$, then
\[
XY+YZ+ZX\leq3.
\]

Write $n=2m$ and
\[
X_{0}=%
{\displaystyle\sum\limits_{i=0}^{m-1}}
x_{i},X_{1}=%
{\displaystyle\sum\limits_{i=m}^{2m-1}}
x_{i},Y_{0}=%
{\displaystyle\sum\limits_{i=0}^{m-1}}
y_{i},Y_{1}=%
{\displaystyle\sum\limits_{i=m}^{2m-1}}
y_{i},Z_{0}=%
{\displaystyle\sum\limits_{i=0}^{m-1}}
z_{i},Z_{1}=%
{\displaystyle\sum\limits_{i=m}^{2m-1}}
z_{i}.
\]
Define $\mathcal{M=}\{(i,j,k)|0\leq i,j<m,m\leq k\leq n-1,i+j+k\equiv0$
$(\operatorname{mod}m)\}$. Note that all of the elements in $\mathcal{M}$
except $(0,0,m)$ satisfy (\ref{equ:2.1}), and $\#(\mathcal{M})=m^{2}$. We have%
\[%
{\displaystyle\sum\limits_{(i,j,k)\in\mathcal{M}}}
(x_{i}y_{j}+y_{j}z_{k}+z_{k}x_{i})-(x_{0}y_{0}+y_{0}z_{m}+z_{m}x_{0}%
)\leq3(m^{2}-1).
\]
Noting also that if two of the variables $i,j,k$ are fixed, then the third is
uniquely determined by the condition $i+j+k\equiv0$ $(\operatorname{mod}m)$.
Thus, we have
\[%
{\displaystyle\sum\limits_{(i,j,k)\in\mathcal{M}}}
(x_{i}y_{j}+y_{j}z_{k}+z_{k}x_{i})=X_{0}Y_{0}+Y_{0}Z_{1}+Z_{1}X_{0}.
\]
It follows that
\[
X_{0}Y_{0}+Y_{0}Z_{1}+Z_{1}X_{0}\leq3(m^{2}-1)+(x_{0}y_{0}+y_{0}z_{m}%
+z_{m}x_{0}).
\]
Similarly,%
\begin{align*}
X_{0}Y_{1}+Y_{1}Z_{0}+Z_{0}X_{0}  &  \leq3(m^{2}-1)+(x_{0}y_{m}+y_{m}%
z_{0}+z_{0}x_{0}),\\
X_{1}Y_{0}+Y_{0}Z_{0}+Z_{0}X_{1}  &  \leq3(m^{2}-1)+(x_{m}y_{0}+y_{0}%
z_{0}+z_{0}x_{m}).
\end{align*}
By the above three inequalities, we claim that%
\begin{align}
&  n^{2}(XY+YZ+ZX)\nonumber\\
&  =(X_{0}+X_{1})(Y_{0}+Y_{1})+(Y_{0}+Y_{1})(Z_{0}+Z_{1})+(Z_{0}+Z_{1}%
)(X_{0}+X_{1})\nonumber\\
&  \leq9(m^{2}-1)+(x_{0}y_{0}+y_{0}z_{m}+z_{m}x_{0})+(x_{0}y_{m}+y_{m}%
z_{0}+z_{0}x_{0})+\nonumber\\
&  +(x_{m}y_{0}+y_{0}z_{0}+z_{0}x_{m})+X_{1}Y_{1}+Y_{1}Z_{1}+Z_{1}X_{1}.
\label{equ:2.2}%
\end{align}
For convenience, write%
\begin{align*}
U  &  =(x_{0}+x_{m})(y_{0}+y_{m})+(y_{0}+y_{m})(z_{0}+z_{m})+(z_{0}%
+z_{m})(x_{0}+x_{m}),\\
\Delta_{0}  &  =x_{0}y_{0}+y_{0}z_{0}+z_{0}x_{0},\\
\Delta_{m}  &  =x_{m}y_{m}+y_{m}z_{m}+z_{m}x_{m},\\
\Delta_{m,0}  &  =x_{m}z_{0}+y_{m}z_{0}+y_{m}x_{0}+z_{m}x_{0}+x_{m}y_{0}%
+z_{m}y_{0}.
\end{align*}
Then (\ref{equ:2.2}) can be denoted by
\begin{equation}
n^{2}(XY+YZ+ZX)\leq9(m^{2}-1)+U-\Delta_{m}+X_{1}Y_{1}+Y_{1}Z_{1}+Z_{1}X_{1}.
\label{equ:2.3}%
\end{equation}
It follows from (\ref{equ:2.1}) that
\begin{align*}
x_{m}y_{m}+y_{m}z_{0}+z_{0}x_{m}  &  \leq3,\\
x_{m}y_{0}+y_{0}z_{m}+z_{m}x_{m}  &  \leq3,\\
x_{0}y_{m}+y_{m}z_{m}+z_{m}x_{0}  &  \leq3.
\end{align*}
Then we have
\begin{equation}
\Delta_{m,0}\leq9-\Delta_{m}. \label{equ:2.4}%
\end{equation}
Together with (\ref{equ:2.2}), we have%
\begin{equation}
n^{2}(XY+YZ+ZX)\leq9m^{2}+\Delta_{0}-\Delta_{m}+X_{1}Y_{1}+Y_{1}Z_{1}%
+Z_{1}X_{1}. \label{equ:2.5}%
\end{equation}

In fact, we will apply inequalities (\ref{equ:2.3}) and (\ref{equ:2.5})
repeatedly later.

Define $\mathcal{M}^{^{\prime}}\mathcal{=}\{(i,j,k):m\leq i,j,k<n,i+j+k\equiv
0$ $(\operatorname{mod}m)\}$. It follows from (\ref{equ:2.1}) that
\[%
{\displaystyle\sum\limits_{(i,j,k)\in\mathcal{M}^{^{\prime}}}}
(x_{i}y_{j}+y_{j}z_{k}+z_{k}x_{i})-(x_{m}y_{m}+y_{m}z_{m}+z_{m}x_{m}%
)\leq3(m^{2}-1).
\]
As has been done previously, we can deduce that%
\begin{align}
&  X_{1}Y_{1}+Y_{1}Z_{1}+Z_{1}X_{1}-\Delta_{m}\nonumber\\
&  ={\sum\limits_{(i,j,k)\in\mathcal{M}^{^{\prime}}}}(x_{i}y_{j}+y_{j}%
z_{k}+z_{k}x_{i})-(x_{m}y_{m}+y_{m}z_{m}+z_{m}x_{m})\nonumber\\
&  \leq3(m^{2}-1). \label{equ:2.6}%
\end{align}
Write $r=x_{0}+x_{m}$, $s=y_{0}+y_{m}$, $t=z_{0}+z_{m}$. We may assume that
$r+s\geq0$, $s+t\geq0$, $t+r\geq0$. In fact, if at least one is negative, say
$r+s<0$, then
\begin{equation}
U=rs+st+tr\leq rs-2(r+s)=(r-2)(s-2)-4\leq(-4)\times(-4)-4=12. \label{equ:2.7}%
\end{equation}
Note that (\ref{equ:2.3}), (\ref{equ:2.6}), and (\ref{equ:2.7}) together can
deduce $XY+YZ+ZX\leq3$. It means the lemma has been true. Hence, we only need
to consider the case $r+s\geq0$, $s+t\geq0$, $t+r\geq0$. We can see that $U$
is an increasing function with the variables $r,s,t$.

We next consider four cases.\newline

\textbf{Case 1.} If $X_{1},Y_{1},Z_{1}<0$. Considering the inequality
(\ref{equ:2.3}), we note that $X_{1}Y_{1}+Y_{1}Z_{1}+Z_{1}X_{1}$ is decreasing
with the variables $X_{1},Y_{1},Z_{1}$. Then we have
\begin{align*}
&  X_{1}Y_{1}+Y_{1}Z_{1}+Z_{1}X_{1}\\
&  \leq\lbrack x_{m}-(m-1)][y_{m}-(m-1)]+[y_{m}-(m-1)][z_{m}-(m-1)]+\\
&  +[z_{m}-(m-1)][x_{m}-(m-1)]\\
&  \leq3(m-1)^{2}-2(m-1)(x_{m}+y_{m}+z_{m})+\Delta_{m}.
\end{align*}
Since $U$ is increasing, we have
\begin{align*}
U  &  \leq(2.2+x_{m})(2.2+y_{m})+(2.2+y_{m})(2.2+z_{m})+(2.2+z_{m}%
)(2.2+x_{m})\\
&  \leq14.52+4.4(x_{m}+y_{m}+z_{m})+\Delta_{m}.
\end{align*}
Together with $\Delta_{m}\leq3$ by (\ref{equ:2.1}), we have
\[
U-\Delta_{m}+X_{1}Y_{1}+Y_{1}Z_{1}+Z_{1}X_{1}\leq17.52+3(m-1)^{2}%
-(2m-6.4)(x_{m}+y_{m}+z_{m}).
\]
If $m=3$, we bound the term $x_{m}+y_{m}+z_{m}$ by $2.2\times3$ trivially.
Then
\[
U-\Delta_{m}+X_{1}Y_{1}+Y_{1}Z_{1}+Z_{1}X_{1}\leq3m^{2}-19m+63\leq3m^{2}+6.
\]
If $m\geq4$, note that the term $x_{m}+y_{m}+z_{m}$ is greater than
$-1\times3$. Then
\[
U-\Delta_{m}+X_{1}Y_{1}+Y_{1}Z_{1}+Z_{1}X_{1}\leq3m^{2}+2.
\]
Hence, it follows from (\ref{equ:2.3}) that $XY+YZ+ZX\leq3$ for all $m\geq
3$.\newline

\textbf{Case 2. }If exactly two of $X_{1},Y_{1},Z_{1}$ are negative, say
$X_{1}<0$, $Y_{1}<0$, and $Z_{1}\geq0$. Now we consider the inequality
(\ref{equ:2.5}). Since $Y_{1}Z_{1},Z_{1}X_{1}$ are both nonpositve, we have
\[
X_{1}Y_{1}+Y_{1}Z_{1}+Z_{1}X_{1}\leq X_{1}Y_{1}.
\]
Noting that $X_{1},Y_{1}<0$, then $X_{1}Y_{1}$ is trivially bounded by
$[x_{m}-(m-1)][y_{m}-(m-1)]$. Hence,
\begin{align*}
&  X_{1}Y_{1}+Y_{1}Z_{1}+Z_{1}X_{1}-\Delta_{m}\\
&  \leq-(m-1)(x_{m}+y_{m})+(m-1)^{2}-(y_{m}z_{m}+z_{m}x_{m})\\
&  \leq2(m-1)+(m-1)^{2}-z_{m}(y_{m}+x_{m})\\
&  \leq m^{2}-1+2\times2.2.
\end{align*}
The second inequality above holds since $z_{m}\geq0$ when $Z_{1}\geq0$.
Together with $\Delta_{0}\leq3\times2.2^{2}$ and (\ref{equ:2.5}), we have
$n^{2}(XY+YZ+ZX)\leq10m^{2}+18\leq12m^{2}$ $(m\geq3)$. Hence, we have
$XY+YZ+ZX\leq3$.\newline

\textbf{Case 3.} If exactly one of $X_{1},Y_{1},Z_{1}$ are negative, say
$X_{1}<0$, $Y_{1}\geq0$, and $Z_{1}\geq0$. And suppose at least one of $X_{1}+
$ $Y_{1}$ and $X_{1}+Z_{1}$ is negative. We may assume $X_{1}+$ $Y_{1}<0$.
Since the term $X_{1}Y_{1}+Y_{1}Z_{1}+Z_{1}X_{1}=(X_{1}+Y_{1})Z_{1}+X_{1}%
Y_{1}\leq0$, we can ignore it in (\ref{equ:2.5}). Noting that at most two
terms of $\Delta_{m}$ are nonpostive, we have $-\Delta_{m}\leq2.2^{2}\times2$.
Together with $\Delta_{0}\leq3\times2.2^{2}$, it follows that $n^{2}%
(XY+YZ+ZX)\leq9m^{2}+5\times2.2^{2}\leq12m^{2}$ $(m\geq3)$. This leads to
$XY+YZ+ZX\leq3$.\newline

\textbf{Case 4.} If $X_{1}+$ $Y_{1}$, $Y_{1}+Z_{1}$, and $Z_{1}+X_{1}$ are all
nonnegative. Therefore, $x_{m}+y_{m}$, $y_{m}+z_{m}$, and $z_{m}+x_{m}$ are
all nonnegative. Noting that $X_{1}Y_{1}+Y_{1}Z_{1}+Z_{1}X_{1}$ is increasing
with variables $X_{1},Y_{1},Z_{1}$, we have
\begin{equation}
X_{1}Y_{1}+Y_{1}Z_{1}+Z_{1}X_{1}\leq m^{2}\Delta_{m}. \label{equ:2.8}%
\end{equation}
Write $E=x_{0}+y_{0}-5(x_{m}+y_{m})$, $F=y_{0}+z_{0}-5(y_{m}+z_{m})$,
$G=z_{0}+x_{0}-5(z_{m}+x_{m})$. Four more cases are considered below:\newline

\textbf{(i)} Suppose $E,F,G$ are all negative. Note that
\[
\lbrack x_{0}+y_{0}-5(x_{m}+y_{m})](z_{0}-z_{m})\leq0.
\]
Upon expanding, it follows that%
\[
(x_{0}z_{0}+y_{0}z_{0})+5(x_{m}z_{m}+y_{m}z_{m})\leq x_{0}z_{m}+y_{0}%
z_{m}+5(x_{m}z_{0}+y_{m}z_{0}).
\]
Similarly, we have
\begin{align*}
(y_{0}x_{0}+z_{0}x_{0})+5(y_{m}x_{m}+z_{m}x_{m})  &  \leq y_{0}x_{m}%
+z_{0}x_{m}+5(y_{m}x_{0}+z_{m}x_{0}),\\
(z_{0}y_{0}+x_{0}y_{0})+5(z_{m}y_{m}+x_{m}y_{m})  &  \leq z_{0}y_{m}%
+x_{0}y_{m}+5(z_{m}y_{0}+x_{m}y_{0}).
\end{align*}
Combining the inequalities above, we have
\[
\Delta_{0}+5\Delta_{m}\leq3\Delta_{m,0}.
\]
Together with (\ref{equ:2.4}), we have
\begin{equation}
\Delta_{0}+8\Delta_{m}\leq27. \label{equ:2.9}%
\end{equation}
Noting that $\Delta_{m}\leq3$ by (\ref{equ:2.1}), (\ref{equ:2.5}),
(\ref{equ:2.8}), and (\ref{equ:2.9}) together can deduce that
\begin{align*}
n^{2}(XY+YZ+ZX)  &  \leq9m^{2}+\Delta_{0}+(m^{2}-1)\Delta_{m}\\
&  \leq9m^{2}+(m^{2}-9)\Delta_{m}+27\\
&  \leq12m^{2}%
\end{align*}
for $m\geq3$, which implies $XY+YZ+ZX\leq3$.\newline

\textbf{(ii) }If exactly two of $E,F,G$ are negative, say $E,F<0$, and
$G\geq0$. We can see that
\[
\lbrack x_{0}+y_{0}-5(x_{m}+y_{m})][z_{0}+x_{0}-5(z_{m}+x_{m})]\leq0.
\]
Upon expanding, we have%
\[
\Delta_{0}+25\Delta_{m}+(x_{0}-5x_{m})^{2}\leq5\Delta_{m,0},
\]
which implies that $\Delta_{0}+25\Delta_{m}\leq5\Delta_{m,0}$. Combining it
with (\ref{equ:2.4}), we have
\[
\Delta_{0}+30\Delta_{m}\leq45.
\]
Then we have
\[
\Delta_{0}+(m^{2}-1)\Delta_{m}\leq\frac{3(m^{2}-1)}{2}+\frac{31-m^{2}}%
{30}\Delta_{0}.
\]
For $3\leq m\leq5$, we have
\begin{align*}
\Delta_{0}+(m^{2}-1)\Delta_{m}  &  \leq\frac{3(m^{2}-1)}{2}+\frac{31-m^{2}%
}{30}\times2.2^{2}\times3\\
&  \leq1.1m^{2}+14\leq3m^{2}.
\end{align*}
For $m\geq6$, we have%
\begin{align*}
\Delta_{0}+(m^{2}-1)\Delta_{m}  &  \leq\frac{3(m^{2}-1)}{2}-\frac{31-m^{2}%
}{30}\times2.2^{2}\times3\\
&  \leq2m^{2}-16\leq3m^{2}.
\end{align*}
Together with (\ref{equ:2.5}) and (\ref{equ:2.8}), we have $n^{2}%
(XY+YZ+ZX)\leq12m^{2}$, which leads to $XY+YZ+ZX\leq3$.\newline

\textbf{(iii) }If exactly one of $E,F,G$ is negative, say $E<0$, $F\geq0$, and
$G\geq0$. The proof is similar to the case (ii).\newline

\textbf{(iv) }If $E,F,G$ are all nonnegative. Note that $x_{m}y_{m}\leq
(\frac{x_{m}+y_{m}}{2})^{2}$, $x_{0}+y_{0}\geq5(x_{m}+y_{m})$ and $x_{m}%
+y_{m}\geq0$ by $X_{1}+Y_{1}\geq0$, then we have%
\[
x_{m}y_{m}\leq(\frac{x_{0}+y_{0}}{10})^{2}\leq0.44^{2}.
\]
Similarly, we have $y_{m}z_{m}\leq0.44^{2}$ and $z_{m}x_{m}\leq0.44^{2}$. It
implies that $\Delta_{m}\leq3\times0.44^{2}\leq1$. We have trivially
$\Delta_{0}\leq3\times2.2^{2}$. By (\ref{equ:2.5}) and (\ref{equ:2.8}), we
have $n^{2}(XY+YZ+ZX)\leq10m^{2}+14\leq12m^{2}$ which implies $XY+YZ+ZX\leq3$.
This completes the proof.

Here we remark that for $n\geq6$, the constant $5/8$ can be slightly improved.

\section{Proof of Theorem 1.3}

The argument of the proof is similar to that in \cite{5 X. Shao}. Using
Theorem \ref{thm 1.2} we can show that

\begin{lemma}
\label{lem 3.1}Let $0<\delta<5/32$ and $0<\eta<2\delta/5$ be parameters. Let
$m$ be a square-free positive integer with $(m,30)=1$. Let $f_{1},f_{2}%
,f_{3}:\mathbb{Z}_{m}^{\ast}\rightarrow\lbrack0,1]$ satisfy
\[
\frac{1}{\phi(m)}%
{\displaystyle\sum\limits_{x\in\mathbb{Z}_{m}^{\ast}}}
f_{1}(x)>\frac{5}{8}+\delta\text{, }\frac{1}{\phi(m)}%
{\displaystyle\sum\limits_{x\in\mathbb{Z}_{m}^{\ast}}}
f_{2}(x)>\frac{5}{8}-\eta\text{, }\frac{1}{\phi(m)}%
{\displaystyle\sum\limits_{x\in\mathbb{Z}_{m}^{\ast}}}
f_{3}(x)>\frac{5}{8}-\eta.
\]
Then for every $x\in\mathbb{Z}_{m}$, there exist $a,b,c\in\mathbb{Z}_{m}%
^{\ast}$ with $x=a+b+c$, such that%
\[
f_{1}(a)f_{2}(b)+f_{2}(b)f_{3}(c)+f_{3}(c)f_{1}(a)>\frac{5}{8}(f_{1}%
(a)+f_{2}(b)+f_{3}(c)).
\]

\end{lemma}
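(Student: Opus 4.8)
The plan is to reduce Lemma~\ref{lem 3.1} to Theorem~\ref{thm 1.2} together with the Cauchy--Davenport--Chowla inequality in the prime case, and then to bootstrap from prime modulus to arbitrary square-free $m$ with $(m,30)=1$ by induction on the number of prime factors of $m$. Throughout, write $\Phi(s,t,r)=st+tr+rs-\tfrac58(s+t+r)$ for $s,t,r\in[0,1]$, so that the displayed inequality in the lemma reads $\Phi(f_1(a),f_2(b),f_3(c))>0$ and its negation reads $\Phi\le0$. The core is the following statement, for a prime $p\ge7$ and a target $y\in\mathbb{Z}_p$: if $h_1,h_2,h_3:\mathbb{Z}_p^{\ast}\to[0,1]$ satisfy $\Phi(h_1(a),h_2(b),h_3(c))\le0$ for all $a,b,c\in\mathbb{Z}_p^{\ast}$ with $a+b+c=y$, then $\Phi(\bar h_1,\bar h_2,\bar h_3)\le0$, where $\bar h_i=\tfrac1{p-1}\sum_{x\in\mathbb{Z}_p^{\ast}}h_i(x)$.

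To prove this, let $\{\alpha_i\}$, $\{\beta_i\}$, $\{\gamma_i\}$ ($0\le i<p-1$) be the decreasing rearrangements of the value sequences of $h_1,h_2,h_3$; their averages are $\bar h_1,\bar h_2,\bar h_3$, and $p-1$ is even and at least $6$. By Theorem~\ref{thm 1.2} it is enough to show $\Phi(\alpha_i,\beta_j,\gamma_k)\le0$ whenever $i+j+k\ge p-1$. Fix such a triple; the level sets $A=\{a\in\mathbb{Z}_p^{\ast}:h_1(a)\ge\alpha_i\}$, $B=\{b:h_2(b)\ge\beta_j\}$, $C=\{c:h_3(c)\ge\gamma_k\}$ have cardinalities at least $i+1$, $j+1$, $k+1$, whence $|A|+|B|+|C|\ge p+2$ and $A+B+C=\mathbb{Z}_p$ by Cauchy--Davenport--Chowla; choose $a\in A$, $b\in B$, $c\in C$ with $a+b+c=y$, so that by hypothesis $\Phi(h_1(a),h_2(b),h_3(c))\le0$. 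The one delicate point is that $\Phi$ is \emph{not} coordinatewise monotone on $[0,1]^3$, so $h_1(a)\ge\alpha_i$, $h_2(b)\ge\beta_j$, $h_3(c)\ge\gamma_k$ do not at once give $\Phi(\alpha_i,\beta_j,\gamma_k)\le0$. This is repaired by noting that $\Phi(s,t,r)>0$ forces $\tfrac58(s+t+r)<st+tr+rs\le\tfrac13(s+t+r)^2$, hence $s+t+r>\tfrac{15}{8}$, so each pairwise sum exceeds $\tfrac78>\tfrac58$ (each variable being $\le1$); thus $\partial_s\Phi=t+r-\tfrac58>0$, and similarly for $t$ and $r$, at $(s,t,r)$ and at every coordinatewise larger point, so $\Phi$ is increasing on the box above $(s,t,r)$. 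Contrapositively, $\Phi(s',t',r')\le0$ and $0\le s\le s'$, $0\le t\le t'$, $0\le r\le r'$ imply $\Phi(s,t,r)\le0$; applying this with $s'=h_1(a)$, $t'=h_2(b)$, $r'=h_3(c)$ and $s=\alpha_i$, $t=\beta_j$, $r=\gamma_k$ completes the argument.

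Now induct on the number of prime factors of $m$. Base case $m=p$: if the conclusion of Lemma~\ref{lem 3.1} failed at some $x$, the statement just proved gives $\Phi(\bar f_1,\bar f_2,\bar f_3)\le0$; but the density hypotheses together with $0<\eta<2\delta/5$ and $0<\delta<5/32$ force the reverse. Indeed, on the region $\bar f_1>\tfrac58+\delta$, $\bar f_2,\bar f_3>\tfrac58-\eta$ all pairwise sums again exceed $\tfrac58$ (using $\eta<\tfrac5{16}$), so the same monotonicity remark bounds $\Phi(\bar f_1,\bar f_2,\bar f_3)$ below by the corner value $\Phi(\tfrac58+\delta,\tfrac58-\eta,\tfrac58-\eta)=\tfrac58(\delta-2\eta)-2\delta\eta+\eta^2$, which is $>0$ since $\tfrac58(\delta-2\eta)>\tfrac58\cdot\tfrac\delta5=\tfrac\delta8$ while $2\delta\eta<\tfrac{4\delta^2}5<\tfrac\delta8$. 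Inductive step: write $m=pm'$ with $p$ the least prime factor (so $p\ge7$) and $m'$ square-free with $(m',30)=1$ and one fewer prime factor, and identify $\mathbb{Z}_m^{\ast}$ with $\mathbb{Z}_p^{\ast}\times\mathbb{Z}_{m'}^{\ast}$ and $x$ with $(x_1,x_2)$. If the conclusion failed for $(f_i)$ at $x$, then for every $(s,t,u)\in(\mathbb{Z}_{m'}^{\ast})^3$ with $s+t+u=x_2$ — such triples exist because $\mathbb{Z}_{m'}^{\ast}+\mathbb{Z}_{m'}^{\ast}+\mathbb{Z}_{m'}^{\ast}=\mathbb{Z}_{m'}$ by Cauchy--Davenport applied to each prime factor — the coordinate slices $g_1(a_1)=f_1(a_1,s)$, $g_2(b_1)=f_2(b_1,t)$, $g_3(c_1)=f_3(c_1,u)$ on $\mathbb{Z}_p^{\ast}$ satisfy $\Phi(g_1(a_1),g_2(b_1),g_3(c_1))\le0$ for all $a_1,b_1,c_1\in\mathbb{Z}_p^{\ast}$ with $a_1+b_1+c_1=x_1$, and the prime-case statement yields $\Phi(F_1(s),F_2(t),F_3(u))\le0$, where $F_i(v)$ denotes the average of $f_i$ over the first coordinate. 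Since averaging over a coordinate leaves the overall average unchanged, $F_1,F_2,F_3:\mathbb{Z}_{m'}^{\ast}\to[0,1]$ meet the density hypotheses of Lemma~\ref{lem 3.1} but fail its conclusion at $x_2$, contradicting the induction hypothesis.

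The main obstacle is the prime-case statement, and within it the non-monotonicity of $\Phi$: one must both realize prescribed level-set thresholds by a genuine solution of $a+b+c=y$ (this is exactly where Cauchy--Davenport--Chowla enters, and where primality is used) and pass from $\Phi$ evaluated at the large values to $\Phi$ evaluated at the rearrangement values, for which the inequality $st+tr+rs\le\tfrac13(s+t+r)^2$ is the key. By contrast, the induction, the averaging identity for the $F_i$, and the corner estimate are routine; the only further care needed is with ties in the decreasing rearrangements, which only help since they enlarge the level sets.
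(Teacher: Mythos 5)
Your proof is correct and follows essentially the same route as the paper's: reduce to the prime case via Theorem~\ref{thm 1.2}, invoke Cauchy--Davenport--Chowla to realise the rearrangement indices as an actual solution of $a+b+c=y$ through level sets, and finish by the coordinatewise monotonicity of $\Phi$ on the region where all pairwise sums exceed $\tfrac58$, inducting on the prime factors of $m$ exactly as the paper does. The only cosmetic differences are your contrapositive packaging and your derivation of the monotonicity region from $st+tr+rs\le\tfrac13(s+t+r)^2$ (giving pairwise sums $>\tfrac78$), where the paper instead shows $a_i+b_j\ge\tfrac58$ directly by noting $a_ib_j>\tfrac58(a_i+b_j)$ is impossible on $[0,1]^2$.
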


\begin{proof}
The proof will proceed by induction. First consider the base case when $m=p$
is prime. It could prove the conclusion only for $p\geq11$ while $f_{1},f_{2}%
,f_{3}$ might be different \cite[Proposition 3.1]{5 X. Shao} and for $p\geq7$
with the constraint condition  $f_{1}=f_{2}=f_{3}$. Now\ by Theorem
\ref{thm 1.2}, we are able to show the case that $f_{1},f_{2},f_{3}$ need not
to be the same for $p\geq7$. Let $a_{0}\geq a_{1}\geq\cdots\geq a_{p-2}$ be $p-1$ values of $f_{1}(x)$
$(x\in\mathbb{Z}_{p}^{\ast})$ in decreasing order. Similarly, define
$b_{0}\geq b_{1}\geq\cdots\geq b_{p-2}$ for $f_{2}(x)$ $(x\in\mathbb{Z}%
_{p}^{\ast})$, and $c_{0}\geq c_{1}\geq\cdots\geq c_{p-2}$ for $f_{3}(x)$
$(x\in\mathbb{Z}_{p}^{\ast})$. Let $A,B,C$ denote the averages of
$\{a_{i}\},\{b_{i}\},\{c_{i}\}$, respectively. We can deduce that
\[
AB+BC+CA>\frac{5}{8}(A+B+C).
\]
To prove it, we make the change of the variables $X=\frac{16}{5}A-1$,
$Y=\frac{16}{5}B-1$, and $Z=\frac{16}{5}C-1$. Then our aim is to prove
$XY+YZ+ZX>3$ when
\[
X>1+\frac{16}{5}\delta,\text{ }Y>1-\frac{16}{5}\eta,\text{ }Z>1-\frac{16}%
{5}\eta.
\]
Note that
\begin{align*}
&  XY+YZ+ZX\\
&  >2(1+\frac{16}{5}\delta)(1-\frac{16}{5}\eta)+(1-\frac{16}{5}\eta)^{2}\\
&  =3+\frac{32}{5}\delta+(\frac{16}{5})^{2}\eta^{2}-\frac{2\times16^{2}}%
{5^{2}}\delta\eta-\frac{64}{5}\eta\\
&  >3+(\frac{16}{5})^{2}\eta^{2}-\frac{32^{2}}{5^{3}}\delta^{2}+\frac
{32}{5^{2}}\delta\\
&  >3\text{ }(0<\delta<\frac{5}{32}).
\end{align*}
Then, by Theorem \ref{thm 1.2}, there exist $0\leq i,j,k\leq p-1$
with$\ i+j+k\geq p-1$, such that
\begin{equation}
a_{i}b_{j}+b_{j}c_{k}+c_{k}a_{i}>\frac{5}{8}(a_{i}+b_{j}+c_{k}%
).\label{equ:3.2}%
\end{equation}
Define $I,J,K\subset\mathbb{Z}_{p}^{\ast}$,
\[
I=\{x:f_{1}(x)\geq a_{i}\},\text{ }J=\{x:f_{2}(x)\geq b_{j}\},\text{
}K=\{x:f_{3}(x)\geq c_{k}\}.
\]
Since $\{a_{i}\},\{b_{i}\},\{c_{i}\}$ are decreasing, we have
\begin{equation}
\left\vert I\right\vert +\left\vert J\right\vert +\left\vert K\right\vert
\geq(i+1)+(j+1)+(k+1)\geq p+2\mathbf{.}\label{equ:3.2.1}%
\end{equation}
By the Cauchy-Davenport-Chowla theorem, it follows from (\ref{equ:3.2.1})
that
\[
I+J+K=\mathbb{Z}_{p}.
\]
That means for any $x\in\mathbb{Z}_{p}$, there exist $a\in I,b\in J,c\in K$
such that $x=a+b+c$. From the definition of $I,J,K$, we can see that
\[
f_{1}(a)\geq a_{i},\text{ }f_{2}(b)\geq b_{j},\text{ }f_{3}(c)\geq c_{k}.
\]
Write $h(x,y,z)=xy+yz+zx-\frac{5}{8}(x+y+z)$. Note that $h(x,y,z)$ is
increasing with variables $x,y,z$ on the area
\[
D=\{0\leq x,y,z\leq1:x+y\geq\frac{5}{8},y+z\geq\frac{5}{8},z+x\geq\frac{5}%
{8}\}.
\]
In fact, (\ref{equ:3.2}) implies $a_{i}+b_{j}\geq\frac{5}{8}$. Otherwise
$a_{i}c_{k}+b_{j}c_{k}\leq\frac{5}{8}c_{k}$, then $a_{i}b_{j}>\frac{5}%
{8}(a_{i}+b_{j})$. But it is impossible since $0\leq a_{i},b_{j}\leq1$.
Similarly, we have $b_{j}+c_{k}\geq\frac{5}{8}$, and $c_{k}+a_{i}\geq\frac
{5}{8}$. Hence, we have%
\[
h(f_{1}(a),f_{2}(b),f_{3}(c))\geq h(a_{i},b_{j},c_{k})>0,
\]
which implies%
\[
f_{1}(a)f_{2}(b)+f_{2}(b)f_{3}(c)+f_{3}(c)f_{1}(a)>\frac{5}{8}(f_{1}%
(a)+f_{2}(b)+f_{3}(c)).
\]
Now we consider $m$ is composite and write $m=m^{^{\prime}}p$ with $p\geq7$.
Noting that $\mathbb{Z}_{m}\cong\mathbb{Z}_{m^{^{\prime}}}\times\mathbb{Z}%
_{p}$, we define $f_{i}^{^{\prime}}:\mathbb{Z}_{m^{^{\prime}}}^{\ast
}\rightarrow\lbrack0,1]$ $(i=1,2,3)$ by
\[
f_{i}^{^{\prime}}(x)=\frac{1}{p-1}%
{\displaystyle\sum\limits_{y\in\mathbb{Z}_{p}^{\ast}}}
f_{i}(x,y).
\]
Then by induction hypothesis, for any $x\in\mathbb{Z}_{m^{^{\prime}}}$, there
exists $a,b,c\in\mathbb{Z}_{m^{^{\prime}}}^{\ast}$ with $x=a+b+c$, such that
\[
f_{1}^{^{\prime}}(a)f_{2}^{^{\prime}}(b)+f_{2}^{^{\prime}}(b)f_{3}^{^{\prime}%
}(c)+f_{3}^{^{\prime}}(c)f_{1}^{^{\prime}}(a)>\frac{5}{8}(f_{1}^{^{\prime}%
}(a)+f_{2}^{^{\prime}}(b)+f_{3}^{^{\prime}}(c)).
\]
Define $a_{0}\geq a_{1}\geq\cdots\geq a_{p-2}$ be $p-1$ values of $f_{1}(a,x)$
$(x\in\mathbb{Z}_{p}^{\ast})$ in decreasing order, and similarly $\{b_{i}\}$
for $f_{2}(b,x)$ and $\{c_{i}\}$ for $f_{3}(c,x)$. Noting that the averages of
$\{a_{i}\},\{b_{i}\},\{c_{i}\}$ are $f_{1}^{^{\prime}}(a),f_{2}^{^{\prime}%
}(b),f_{3}^{^{\prime}}(c)$, respectively. It follows from Theorem
\ref{thm 1.2} that there exist $0\leq i,j,k\leq p-1$ with$\ i+j+k\geq p-1$,
such that
\[
a_{i}b_{j}+b_{j}c_{k}+c_{k}a_{i}>\frac{5}{8}(a_{i}+b_{j}+c_{k}).
\]
Similarly, we can deduce that for any $y\in\mathbb{Z}_{p}$, there exist
$u,v,w\in\mathbb{Z}_{p}^{\ast}$ with $y=u+v+w$, such that
\[
f_{1}(a,u)f_{2}(b,v)+f_{2}(b,v)f_{3}(c,w)+f_{3}(c,w)f_{1}(a,u)>\frac{5}%
{8}(f_{1}(a,u)+f_{2}(b,v)+f_{3}(c,w)).
\]
This completes the proof.
\end{proof}

\begin{lemma}
\label{lem 3.2}Let $f_{1},f_{2},f_{3}:\mathbb{Z}_{15}^{\ast}\rightarrow
\lbrack0,1]$ be arbitrary functions satisfying
\[
F_{1}F_{2}+F_{2}F_{3}+F_{3}F_{1}>5(F_{1}+F_{2}+F_{3}),
\]
where $F_{i}=%
{\displaystyle\sum\limits_{x\in\mathbb{Z}_{15}^{\ast}}}
f_{i}(x)$. Then for every $x\in\mathbb{Z}_{15}$, there exist $a,b,c\in
\mathbb{Z}_{15}^{\ast}$ with $x=a+b+c$, such that%
\[
f_{1}(a)f_{2}(b)f_{3}(c)>0,\text{ }f_{1}(a)+f_{2}(b)+f_{3}(c)>\frac{3}{2}.
\]

\end{lemma}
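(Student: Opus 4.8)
The plan is to reduce Lemma~\ref{lem 3.2} to an explicit finite problem on $\mathbb{Z}_{15}$ by exploiting the structure $\mathbb{Z}_{15}^{\ast}\cong\mathbb{Z}_{3}^{\ast}\times\mathbb{Z}_{5}^{\ast}$, following the layout of Shao's treatment of the prime case but with the role of the Cauchy--Davenport--Chowla theorem taken over by hands-on sumset computations in $\mathbb{Z}_{5}$, since $\phi(5)=4<6$ means Theorem~\ref{thm 1.2} is no longer available on that factor. Two preliminary facts drive the argument. Writing $D_{i}=F_{i}/8$ for the densities, the hypothesis $F_{1}F_{2}+F_{2}F_{3}+F_{3}F_{1}>5(F_{1}+F_{2}+F_{3})$ is exactly $D_{1}D_{2}+D_{2}D_{3}+D_{3}D_{1}>\tfrac58(D_{1}+D_{2}+D_{3})$; by the elementary manipulation already used in the proof of Lemma~\ref{lem 3.1} this forces every pairwise sum $D_{i}+D_{j}\geq\tfrac58$, and a short computation then shows at most one $D_{i}$ can be $\leq\tfrac12$, i.e.\ at least two of $F_{1},F_{2},F_{3}$ exceed $4$. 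Secondly, a small optimisation shows that any $(u,v,w)\in[0,1]^{3}$ with $uv+vw+wu>\tfrac58(u+v+w)$ automatically satisfies $u+v+w>\tfrac{15}{8}>\tfrac32$ (the pairwise-sum bound localises the minimiser at the interior point $(\tfrac58,\tfrac58,\tfrac58)$, where the value is $\tfrac{15}{8}$, and the cube faces give strictly larger values). Hence it suffices, for each target $x$, to exhibit $a+b+c=x$ with all three $f_{i}$ positive and \emph{either} the winning inequality $f_{1}(a)f_{2}(b)+f_{2}(b)f_{3}(c)+f_{3}(c)f_{1}(a)>\tfrac58\bigl(f_{1}(a)+f_{2}(b)+f_{3}(c)\bigr)$ \emph{or} two of the three values equal to $1$.

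Next come the symmetry reductions. The group $\mathbb{Z}_{15}^{\ast}$ acts on $\mathbb{Z}_{15}$ by multiplication with orbits $\{0\}$, $\mathbb{Z}_{15}^{\ast}$, $\{3,6,9,12\}$, $\{5,10\}$, and the substitution $f_{i}(y)\mapsto f_{i}(uy)$ for $u\in\mathbb{Z}_{15}^{\ast}$ leaves each $F_{i}$ unchanged while carrying the statement for $x$ to that for $ux$; so it is enough to treat $x\in\{0,1,3,5\}$, and I may also assume $F_{1}\geq F_{2}\geq F_{3}$. Now write elements of $\mathbb{Z}_{15}^{\ast}$ as pairs $(j,y)\in\mathbb{Z}_{3}^{\ast}\times\mathbb{Z}_{5}^{\ast}$ and note that for each fixed $j\in\{1,2\}$ the four elements $(j,\cdot)$ already realise every residue in $\mathbb{Z}_{5}^{\ast}$. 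If $x\equiv0\pmod 3$ (targets $0,3$), every representation $x=a+b+c$ in $\mathbb{Z}_{15}^{\ast}$ has all three summands in one common class $j$, so the problem collapses to one for the restricted functions $y\mapsto f_{i}(j,y)$ on $\mathbb{Z}_{5}^{\ast}$ for a suitable choice of $j$; if $x\not\equiv0\pmod 3$ (targets $1,5$), exactly one, resp.\ two, of the summands lie in the minority class, and one has a free choice of which index is exceptional, again producing a $\mathbb{Z}_{5}^{\ast}$-problem for suitable restrictions of $f_{1},f_{2},f_{3}$ whose $\ell^{1}$-masses are bounded below via $F_{i}=\sum_{j}\sum_{y}f_{i}(j,y)$.

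The core is then a finite statement on $\mathbb{Z}_{5}^{\ast}$: for $g_{1},g_{2},g_{3}:\mathbb{Z}_{5}^{\ast}\to[0,1]$ and a target $t$, if the masses are large enough (the precise thresholds being those that the bookkeeping of the previous step provides), there is $a+b+c=t$ with positive values meeting one of the two conclusions above. When the three restricted functions are comparably large, one sorts their values and, by a layer-cake argument combined with the Cauchy--Davenport--Chowla bound $|A+B+C|\geq\min(|A|+|B|+|C|-2,5)$, produces a triple satisfying the pairwise-product inequality. When one restricted function has small mass, the hypothesis has already forced the other two to be essentially $\equiv1$; one then fixes a point in the support of the small one and solves the residual two-term equation $a+b=t-c$ in $\mathbb{Z}_{5}$ (again Cauchy--Davenport), averaging over its several solutions to guarantee that the surviving two values sum past $\tfrac32-1$.

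The main obstacle is the intermediate regime and the bookkeeping it requires: when $F_{3}$ is moderate (roughly $4$ to $5$), one must choose the common class $j$ (in the $x\equiv0$ case) or the exceptional index (in the $x\not\equiv0$ case) so that all three restricted masses simultaneously clear the $\mathbb{Z}_{5}^{\ast}$-threshold, and this is done by playing off the elementary bound $\max_{j}\sum_{y}f_{i}(j,y)\geq F_{i}/2$ against the \emph{strict} inequality $F_{1}F_{2}+F_{2}F_{3}+F_{3}F_{1}>5(F_{1}+F_{2}+F_{3})$. The boundary configuration $F_{1}=F_{2}=F_{3}=5$ --- realised precisely by the set $\{1,4,7,11,13\}$ that obstructs the sharp theorem --- shows that no slack is available, so each of these inequalities must be tracked carefully through the $\mathbb{Z}_{3}\times\mathbb{Z}_{5}$ splitting; verifying that the finitely many surviving configurations of mass-distributions across the two residue classes all admit an admissible triple is the technical heart of the proof.
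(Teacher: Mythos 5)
The paper does not actually prove this lemma: its ``proof'' is a one-line citation to Shao's Proposition 3.2. You are therefore attempting to reconstruct a proof from scratch, and while your outline is plausible in broad structure, it stops well short of a proof. Your two preliminary facts are sound: the strict hypothesis does force every pairwise sum $D_i+D_j>\tfrac58$ (hence at most one $D_i\leq\tfrac12$), and any $(u,v,w)\in[0,1]^3$ with $uv+vw+wu>\tfrac58(u+v+w)$ does have $u+v+w>\tfrac{15}{8}>\tfrac32$. But the reduction to a $\mathbb{Z}_5^\ast$ problem has a genuine hole that you name and then wave away: for $x\equiv 0\pmod 3$ you must choose a \emph{single} mod-$3$ class $j$ so that all three restrictions $y\mapsto f_i(j,y)$ simultaneously carry enough mass to run the Cauchy--Davenport / layer-cake step, and the bound $\max_j\sum_y f_i(j,y)\geq F_i/2$ does not furnish a common $j$ --- the maxima for different $i$ can sit in different classes. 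A configuration with $f_1$ concentrated on class $1$ and $f_2$ on class $2$ would derail the argument unless the strict hypothesis precludes it, which you assert but never demonstrate. You also never state the core $\mathbb{Z}_5^\ast$ claim precisely (the thresholds are left as ``those the bookkeeping provides''), and the final paragraph explicitly defers ``verifying that the finitely many surviving configurations \ldots all admit an admissible triple'' to an unspecified computation. Because the constant is tight --- $F_1=F_2=F_3=5$ is only barely excluded by the strict inequality --- there is no slack allowing a crude estimate, so that deferred verification is exactly where the proof lives, and it is missing. As it stands this is a research plan, not a proof.
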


\begin{proof}
See \cite[Proposition 3.2]{5 X. Shao}.
\end{proof}

\textbf{\ }Now we deduce Theorem \ref{thm 1.3}. First note that if the result
is true for $m$, then it holds for any $m^{^{\prime}}$ dividing $m$. So we
suppose $15|m$. Write $m=15m^{^{\prime}}$. Note that $(m^{^{\prime}},30)=1$.
Since $\mathbb{Z}_{m}\cong\mathbb{Z}_{m^{^{\prime}}}\times\mathbb{Z}_{15}$, we
can write $(u,v)$ $(u\in\mathbb{Z}_{m^{^{\prime}}},v\in\mathbb{Z}_{15})$ as
the arbitrary term in $\mathbb{Z}_{m}$. Define $f_{i}^{^{\prime}}%
:\mathbb{Z}_{m^{^{\prime}}}\rightarrow\lbrack0,1]$ $(i=1,2,3)$ by%
\[
f_{i}^{^{\prime}}(x)=\frac{1}{\phi(15)}%
{\displaystyle\sum\limits_{y\in\mathbb{Z}_{15}^{\ast}}}
f_{i}(x,y).
\]
Note that $f_{i}^{^{\prime}}(x)$ $(i=1,2,3)$ satisfy the condition of Lemma
\ref{lem 3.1}, and we can conclude that for every $u\in\mathbb{Z}%
_{m^{^{\prime}}}$, there exist $a_{1},a_{2},a_{3}\in\mathbb{Z}_{m^{^{\prime}}%
}^{\ast}$ with $u=a_{1}+a_{2}+a_{3}$, such that%
\begin{equation}
f_{1}^{^{\prime}}(a_{1})f_{2}^{^{\prime}}(a_{2})+f_{2}^{^{\prime}}(a_{2}%
)f_{3}^{^{\prime}}(a_{3})+f_{3}^{^{\prime}}(a_{3})f_{1}^{^{\prime}}%
(a_{1})>\frac{5}{8}(f_{1}^{^{\prime}}(a_{1})+f_{2}^{^{\prime}}(a_{2}%
)+f_{3}^{^{\prime}}(a_{3})). \label{equ:3.3}%
\end{equation}
Now define $f_{i}^{\#}:\mathbb{Z}_{15}^{\ast}\rightarrow\lbrack0,1]$ by%
\[
f_{i}^{\#}(y)=f_{i}(a_{i},y).
\]
With (\ref{equ:3.3}), we note that $f_{i}^{\#}(y)$ satisfy the condition of
Lemma \ref{lem 3.2}. Thus, for every $v\in\mathbb{Z}_{15}$, there exist
$b_{1},b_{2},b_{3}\in\mathbb{Z}_{15}^{\ast}$ with $v=b_{1}+b_{2}+b_{3}$, such
that
\[
f_{i}^{\#}(b_{i})>0\text{, }f_{1}^{\#}(b_{1})+f_{2}^{\#}(b_{2})+f_{3}%
^{\#}(b_{3})>\frac{3}{2}.
\]
Note that $(u,v)=(a_{1},b_{1})+(a_{2},b_{2})+(a_{3},b_{3})$. It follows that
\[
f_{i}(a_{i},b_{i})>0\text{ }(i=1,2,3)\text{, }f_{1}(a_{1},b_{1})+f_{2}%
(a_{2},b_{2})+f_{3}(a_{3},b_{3})>\frac{3}{2}.
\]
This completes the proof.

\section{Sketch of the proof of Theorem 1.1}

The proof is almost same as in \cite{5 X. Shao}. Therefore, we omit the
details. Theorem \ref{thm 1.1} can be deduced from the following transference
principle Proposition \ref{prop 3.3}.

For $f:\mathbb{Z}_{N}\rightarrow\mathbb{C}$, we define the Fourier transform
of $f$ by%
\[
f(r)=%
{\displaystyle\sum\limits_{x\in\mathbb{Z}_{N}}}
f(x)e_{N}(rx),\text{ }r\in\mathbb{Z}_{N},
\]
where $e_{N}(y)=\exp(2\pi iy/N)$.

\begin{proposition}
\label{prop 3.3}Let $N$ be a sufficiently large prime. Suppose that $\mu
_{i}:\mathbb{Z}_{N}\rightarrow\mathbb{R}^{+}$ and $a_{i}:\mathbb{Z}%
_{N}\rightarrow\mathbb{R}^{+}$ $(i=1,2,3)$ are functions satisfying the
majorization condition
\[
0\leq a_{i}(n)\leq\mu_{i}(n),
\]
and the mean condition
\[
\min(\delta_{1},\delta_{2},\delta_{3},\delta_{1}+\delta_{2}+\delta_{3}%
-1)\geq\delta
\]
for some $\delta>0$, where $\delta_{i}=%
{\displaystyle\sum\nolimits_{x\in\mathbb{Z}_{N}}}
a_{i}(x)$ $(i=1,2,3)$. Suppose that $\mu_{i}$ and $a_{i}$ also satisfy the
pseudorandomness conditions$\ $%
\[
|\hat{\mu}_{i}(r)-\delta_{r,0}|\leq\eta,\text{ }r\in\mathbb{Z}_{N},
\]
where $\delta_{r,0}$ is the Kronecker delta, and%
\[
||\hat{a}_{i}||_{q}=\left(
{\displaystyle\sum\limits_{r\in\mathbb{Z}_{N}}}
|\hat{a}_{i}(r)|^{q}\right)  ^{1/q}\leq M
\]
for some $2<q<3$ and $\eta,M>0$. Then for any $x\in\mathbb{Z}_{N}$, we have
\[%
{\displaystyle\sum\limits_{y,z\in\mathbb{Z}_{N}}}
a_{1}(y)a_{2}(z)a_{3}(x-y-z)\geq\frac{c(\delta)}{N}%
\]
for some constant $c(\delta)>0$ depending only on $\delta$, provided that
$\eta\leq\eta(\delta,M,q)$ is small enough.
\end{proposition}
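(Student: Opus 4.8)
\textbf{Proof proposal for Proposition \ref{prop 3.3}.}
The plan is to follow the standard transference-principle architecture of Green and Green--Tao, as adapted by Shao. First I would perform a Fourier decomposition of each $a_i$ relative to a set of large frequencies. Fix a threshold parameter $\epsilon>0$ (to be chosen in terms of $\delta$) and define the set of major arcs
\[
R_i=\{r\in\mathbb{Z}_N:|\hat a_i(r)|\geq\epsilon\}.
\]
From the bound $\|\hat a_i\|_q\leq M$ and $q<3$ one gets $|R_i|\leq (M/\epsilon)^q$, so the major arcs form a bounded set. Let $B=\bigcup_i R_i$ and set $R$ to be a Bohr-type set of frequencies approximately closed under the relevant arithmetic; then decompose $a_i=a_i^\flat+a_i^\sharp$ where $a_i^\flat$ is the ``structured'' part, obtained by projecting onto the span of characters in $B$ (or, more precisely, by convolving $a_i$ with a suitable Bohr-set cutoff $\beta$ so that $a_i^\flat=a_i*\beta$), and $a_i^\sharp=a_i-a_i^\flat$ is the ``uniform'' part. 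The pseudorandomness hypothesis $|\hat\mu_i(r)-\delta_{r,0}|\leq\eta$ transfers to $a_i$ via $0\leq a_i\leq\mu_i$, giving $|\widehat{a_i^\sharp}(r)|$ small for all $r$ once $\eta$ is small, hence $\|\widehat{a_i^\sharp}\|_\infty\leq\epsilon+O(\eta)$, while $\|\widehat{a_i^\sharp}\|_q\leq 2M$ still holds.

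The second step is the $L^p$-estimate controlling the uniform contribution. Writing the trilinear count as a sum over $r$ of $\widehat{a_1}(r)\widehat{a_2}(r)\widehat{a_3}(r)e_N(-rx)/N$ (up to normalization), I split into the eight terms coming from $a_i=a_i^\flat+a_i^\sharp$; the main term is the all-$\flat$ term and every term containing at least one $\sharp$ factor is an error. Each such error term is bounded by a H\"older argument: $\sum_r|\widehat{a_i^\sharp}(r)|\,|\widehat{a_j}(r)|\,|\widehat{a_k}(r)|\leq \|\widehat{a_i^\sharp}\|_\infty^{1-\theta}\|\widehat{a_i^\sharp}\|_q^\theta\cdot\|\widehat{a_j}\|_q\|\widehat{a_k}\|_{q'}$ for appropriate exponents, which is $O(\epsilon^{1-\theta}M^{2+\theta})$, hence negligible once $\epsilon$ is chosen small in terms of $\delta$ and $M$. (One must also use Parseval and the majorization to control $\|\widehat{a_j}\|_{q'}$-type norms by $M$ and $\delta_j$.)

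The third step is the positivity of the main (structured) term. Here the point is that $a_i^\flat=a_i*\beta$ is a genuinely bounded function: $0\leq a_i^\flat\leq 1+O(\eta)$ pointwise, because $a_i\leq\mu_i$ and $\mu_i$ has Fourier transform close to the delta, so convolving $\mu_i$ with the probability measure $\beta$ yields something close to the constant $1$; meanwhile $a_i^\flat$ has the same mean $\delta_i$ as $a_i$. Passing to the quotient by the Bohr set, $a_i^\flat$ descends (approximately) to a function on a genuine cyclic group $\mathbb{Z}_{m}$ with $m$ square-free and coprime to whatever small primes we need, and with average $>\delta_i'\approx\delta_i$. This is precisely the configuration where Theorem \ref{thm 1.3} (equivalently Corollary \ref{corol 1.4}) applies: since $\min(\delta_1,\delta_2,\delta_3,\delta_1+\delta_2+\delta_3-1)\geq\delta$, after a harmless normalization the three densities sit above $5/8$ in the required asymmetric way, so $A_1+A_2+A_3$ covers every residue class, and a supersaturation/counting version gives that the structured trilinear count is $\geq c(\delta)/N$ for a positive $c(\delta)$. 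Combining with the error bounds from step two finishes the proof.

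The main obstacle I expect is the bookkeeping in step three: one has to arrange the Bohr-set modulus to be square-free and coprime to $30$ (so that Lemma \ref{lem 3.1} and Lemma \ref{lem 3.2} both apply), control the loss of density incurred when replacing $a_i^\flat$ by its average over Bohr-set cosets (this loss must be made $<\eta$, which dictates the dependence $\eta=\eta(\delta,M,q)$), and convert the purely additive-combinatorial conclusion ``$A_1+A_2+A_3=\mathbb{Z}_m$'' into a quantitative lower bound of the right order $1/N$ for the weighted count over $\mathbb{Z}_N$ rather than a mere nonemptiness statement. All of this is carried out in detail in \cite{5 X. Shao}; since our only new input is the extension of the density threshold to the asymmetric triple via Theorem \ref{thm 1.2}, we may quote the remainder of the argument verbatim.
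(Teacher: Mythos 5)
The paper offers no original proof of Proposition~\ref{prop 3.3}; it is quoted verbatim from Shao and the ``proof'' is a single citation to \cite[Proposition 4.1]{5 X. Shao}. So there is no in-paper argument to compare against; the question is whether your sketch would correctly reproduce Shao's. Your Steps~1 and~2 (Fourier splitting $a_i=a_i^\flat+a_i^\sharp$ via a large-spectrum/Bohr-set cutoff, using $\|\hat a_i\|_q\le M$ with $2<q<3$ plus H\"older to kill all error terms containing a $\sharp$ factor, and using $|\hat\mu_i(r)-\delta_{r,0}|\le\eta$ to get pointwise boundedness $a_i^\flat\lesssim (1+O(\eta))/N$) are the correct Green--Green--Tao architecture, modulo some exponent bookkeeping.

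Step~3, however, has a genuine error. You invoke Theorem~\ref{thm 1.3} (equivalently Corollary~\ref{corol 1.4}) by asserting that the mean condition $\min(\delta_1,\delta_2,\delta_3,\delta_1+\delta_2+\delta_3-1)\ge\delta$ can be ``harmlessly normalized'' to put all three densities above $5/8$. This is false: the hypothesis is satisfied with, say, $\delta_1=\delta_2=\delta_3=0.4$ (giving $\delta=0.2$), and no renormalization turns these into densities above $5/8$. Proposition~\ref{prop 3.3} is a \emph{general} transference principle whose density input is only ``the $\delta_i$ are bounded below and their sum exceeds $1$,'' and its proof does not, and cannot, run through the $5/8$ residue-class covering result. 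The positivity of the structured trilinear term comes from the elementary fact that if $f_1,f_2,f_3:\mathbb{Z}_N\to[0,1+O(\eta)]$ have averages $\alpha_i$ with $\min(\alpha_1,\alpha_2,\alpha_3,\alpha_1+\alpha_2+\alpha_3-1)\ge\delta'$, then $\sum_{y+z+w=x}f_1(y)f_2(z)f_3(w)\gg_{\delta'}N^2$ for every $x$; this is proved by direct counting/smoothing, with no descent to a square-free modulus and no use of the $5/8$ threshold. You have also inverted the logical order of the paper: Theorem~\ref{thm 1.3} is applied \emph{before} Proposition~\ref{prop 3.3}, to choose residue classes $b_1,b_2,b_3\pmod W$ so that the $W$-tricked restrictions of $P_1,P_2,P_3$ have the mean condition required by Proposition~\ref{prop 3.3}; the two results are used in series, not with one nested inside the proof of the other. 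As written, Step~3 of your sketch would not close.
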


\begin{proof}
See \cite[Proposition 4.1]{5 X. Shao}.
\end{proof}

Let $n$ be a very large positive odd integer. The aim is to show there exist
$p_{1}\in P_{1}$, $p_{2}\in P_{2}$, and $p_{3}\in P_{3}$ such that
$n=p_{1}+p_{2}+p_{3}$. In the case of Theorem \ref{thm 1.1}, we note that
there exist $0<\delta<5/12$ and $0<\eta<\delta/50$ such that%
\begin{align}
\left\vert P_{1}\cap\lbrack1,N]\right\vert  &  >(\frac{5}{8}+\delta)\frac
{N}{\log N},\nonumber\\
\left\vert P_{i}\cap\lbrack1,N]\right\vert  &  >(\frac{5}{8}-\eta)\frac
{N}{\log N}\text{ }(i=2,3), \label{equ:4.1}%
\end{align}
for any sufficiently large integer $N>0$. Define $f_{i}:\mathbb{Z}_{W}^{\ast
}\rightarrow\lbrack0,1]$ $(i=1,2,3)$ by%
\[
f_{i}(b)=\max\left(  \frac{3\phi(W)}{2n}%
{\displaystyle\sum\limits_{x\in P_{i}\cap(W\mathbb{Z}+b),x<\frac{2n}{3}}}
\log x-\frac{\delta}{8},0\right)  .
\]
Here $W=%
{\displaystyle\prod\nolimits_{p\text{ prime, }p<z}}
p$, where $z=z(\delta)$ is a large parameter. It follows from (\ref{equ:4.1}) that%

\begin{align*}%
{\displaystyle\sum\limits_{b\in\mathbb{Z}_{W}^{\ast}}}
f_{1}(b)  &  >(\frac{5}{8}+\frac{3\delta}{8})\phi(W),\\
\text{ }%
{\displaystyle\sum\limits_{b\in\mathbb{Z}_{W}^{\ast}}}
f_{i}(b)  &  >(\frac{5}{8}-(\frac{5\eta}{4}+\frac{\delta}{8}))\phi(W)\text{
}(i=2,3).
\end{align*}
Note that $\frac{5\eta}{4}+\frac{\delta}{8}<\frac{2}{5}\times\frac{3\delta}%
{8}$ by $0<\eta<\delta/50$. We can deduce from Theorem \ref{thm 1.3} that
there exist $b_{1},b_{2},b_{3}\in\mathbb{Z}_{W}^{\ast}$ with $b_{1}%
+b_{2}+b_{3}\equiv n$ $(\operatorname{mod}W)$ such that
\begin{equation}
f_{1}(b_{1})f_{2}(b_{2})f_{3}(b_{3})>0,f_{1}(b_{1})+f_{2}(b_{2})+f_{3}%
(b_{3})>\frac{3}{2}. \label{equ:4.3}%
\end{equation}
The rest part of the proof is just like the proof in \cite{5 X. Shao}.
Applying (\ref{equ:4.3}), one can confirm the mean condition in Proposition
\ref{prop 3.3}. The pseudorandomness conditions hold by Lemma 6.2 and Lemma
6.6 in \cite{2 Green}. The majorization condition is satisfied immediately
from the definitions of $a_{i}$ and $\mu_{i}$. Then the transference principle
is applied, leading to Theorem \ref{thm 1.1}. Here we want to refer readers to
section 4 of \cite{5 X. Shao} for further details.

\textbf{Acknowledgements. }The author would like to thank his advisor
Professor Yonghui Wang specially for his constant guidance, and Wenying Chen
for helpful discussions in seminar.

\textbf{Contact information:}\newline Quanli Shen\newline Department of Math,
Capital Normal University\newline Xi San Huan Beilu 105, Beijing 100048, P.R.
China,\newline Email: qlshen@outlook.com.

\end{document}